\newtheorem{lema}{Lemma}[section]
\newtheorem{theo}[lema]{Theorem}
\newtheorem{prop}[lema]{Proposition}
\newtheorem{coro}[lema]{Corollary}
\theoremstyle{definition}
\theoremstyle{remark}
\newtheorem{rema}[lema]{Remark}
\newtheorem{exam}[lema]{Example}
\title[Rational limit cycles of Abel Differential Equations]{Rational limit cycles of Abel Differential Equations}
\author{J.L. Bravo}
\address{Departamento de Matematicas, Universidad de Extremadura, 06071 Badajoz, Spain}
\email{trinidad@unex.es}
\author{L.A. Calder\'on*}
\address{Departament de Ciencies Matematiques i Informatica, Universitat de les Illes Ballears, 07122 Palma, Spain}
\email{l.calderon@uib.es}
\author{I. Ojeda}
\address{Departamento de Matematicas, Universidad de Extremadura, 06071 Badajoz, Spain}
\email{ojedamc@unex.es}
\thanks{* Corresponding author}
\thanks{
The authors are partially supported by Junta de Extremadura/FEDER grant number IB18023. JLB and LAC are also partially supported by Junta de Extremadura/FEDER grant number GR21056 and by grant number PID2020-118726GB-I00 funded by MCIN/
AEI/10.13039/501100011033 and by FEDER Funds "A way of making Europe". IO is also partially supported by Grant PGC2018-096446-B-C21 funded by MCIN/AEI/10.13039/501100011033, by ``ERDF A way of making Europe", and by Grant GR21055 funded by Junta de Extremadura (Spain)/FEDER funds.}
\subjclass[2010]{34C25}
\keywords{Periodic solution; Limit cycle; Abel equation}
\begin{document}

\begin{abstract}
We study the number of rational limit cycles of the Abel equation $x'=A(t)x^3+B(t)x^2$, where $A(t)$ and $B(t)$ are real trigonometric polynomials. We show that this number is at most the degree of $A(t)$ plus one.
\end{abstract}

\maketitle

\section{Introduction}

The Abel differential equation 
\[	
x'=A(t)x^3+B(t)x^2+C(t)x,
\]
where $A(t),B(t)$ and $C(t)$ are trigonometric polynomials has been studied by many authors, either for its relation to higher degree phenomena (see e.g.~\cite{LN}), for applications to real-world models (see e.g.~\cite{BNP}), or for its own intrinsic interest (see e.g. \cite{G}).

In this paper we consider Abel differential equations without linear term, that is, 
\begin{equation}	
x'=A(t)x^3+B(t)x^2,\label{eq:Abel}
\end{equation}
with $A(t)$ and $B(t)$ being real trigonometric polynomials.

Among the main problems related to this equation, we could name the Smale-Pugh~\cite{S} problem, which is considered as a particular case of the 16th Hilbert problem. The problem consists in bounding the number of limit cycles of \eqref{eq:Abel}, that is, the number of isolated periodic solutions in the set of periodic solutions of the equation. In connection with this problem, Lins-Neto~\cite{LN} proved that there is no upper bound on the number of limit cycles of~\eqref{eq:Abel}.

Another important problem often mentioned in the literature is the Poin\-car\'e center-focus problem applied to this setting. Trivially, $x(t) = 0$ is a solution of the equation. The problem asks when the equation~\ref{eq:Abel} has a center at $x(t) = 0$, i.e., all solutions in a neighborhood of the solution $x(t) = 0$ are closed. This problem for \eqref{eq:Abel} was proposed by Briskin, Fran\c{c}oise, and Yondim~\cite{BFY1,BFY2}.

When faced with the Smale-Pugh problem, one of the most common strategies for obtaining upper bounds on the number of limit cycles is to impose some additional restrictions on $A(t)$ and $B(t)$. For example, it has been shown that if $A(t)\neq0$ or $B(t)\neq0$ does not change sign, or if there exist $\alpha,\beta\in\mathbb{R}$ such that $\alpha A(t)+\beta B(t)\neq0$ does not change sign, then the equation has at most three limit cycles \cite{GLL,AGG}. See \cite{G} for more information.

Another strategy is to focus on the problem for limit cycles of a certain form or with certain properties that may be of particular interest. An example of such a result is that the generalized Abel equation with polynomial coefficients and degree $n$ has at most $n$ polynomial limit cycles, see \cite{GGL13}.

In this paper we obtain an upper bound on the number of rational limit cycles of the equation \eqref{eq:Abel}, i.e.,  limit cycles of the form $x(t)=Q(t)/P(t)$, where $P(t)$ and $Q(t)$ are real trigonometric polynomials and $P(t) \neq 0$ for all $t \in \mathbb{R}$. Recall that a real trigonometric polynomial of degree $n$ is an expression of the form \[\sum_{k=0}^n a_k \cos(kt) + b_k \sin(kt),\quad a_k, b_k \in \mathbb{R}\] with $a_n \cdot b_n \neq 0$. As usual, we write $\mathbb{R}[\cos(t), \sin(t)]$ for the ring of real trigonometric polynomials.

Using this notation, our main result is as follows.

\begin{theo}\label{th:main}
Let $A(t), B(t)\in \mathbb{R}[\cos(t), \sin(t)]$. If the degree of $A(t)$ is odd or less than twice the degree of $B(t)$, then \eqref{eq:Abel} has at most two non-trivial rational limit cycles. Otherwise, the number of non-trivial rational limit cycles of equation \eqref{eq:Abel} is at most the degree of $A(t)$ plus one.
\end{theo}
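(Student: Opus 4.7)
The plan is to reduce the equation to a simpler form, extract a degree identity between pairs of solutions, and bound the remaining count by a dimension argument.

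Writing a non-trivial rational solution as $x = Q/P$ with $P, Q \in \mathbb{R}[\cos t, \sin t]$ coprime in the Laurent polynomial ring $\mathbb{C}[e^{it}, e^{-it}]$ and $P$ non-vanishing, substituting into \eqref{eq:Abel} and clearing denominators gives
\[
P(PQ' - QP') = Q^2(AQ + BP).
\]
Since $\gcd(P,Q) = 1$, divisibility forces $Q \mid PQ' - QP'$ and hence $Q \mid Q'$. In the Laurent polynomial ring this forces $Q' = cQ$ for a constant $c$, so $Q = Q(0)e^{ct}$; periodicity and reality then pin $Q$ to a nonzero real constant. Rescaling, every non-trivial rational limit cycle has the form $x = 1/P$ with $P$ a non-vanishing real trigonometric polynomial solving
\[
PP' + BP + A = 0. \quad (\ast)
\]
In particular $P \mid A$.

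For two distinct solutions $P_i, P_j$ of $(\ast)$, uniqueness of ODE trajectories for \eqref{eq:Abel} applied to $1/P_i$ and $1/P_j$ shows that $D = P_i - P_j$ is a non-vanishing trigonometric polynomial. Subtracting the instances of $(\ast)$ and using $P_i' + B = -A/P_i$ yields $D'/D = A/(P_iP_j)$, i.e.\ the trigonometric-polynomial identity $AD = P_iP_j D'$. When $A \ne 0$, $D$ is non-constant, so $\deg D = \deg D'$ and equating total degrees gives $\deg P_i + \deg P_j = \deg A$. Hence three or more rational limit cycles force $\deg P_i = \deg A/2$ for every $i$, so $\deg A$ must be even. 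A leading-coefficient analysis of $(\ast)$ further shows that a solution of degree $n$ generically satisfies $\deg A = \max(2n, n + \deg B)$; combining with $n = \deg A/2$ forces $n \ge \deg B$, i.e.\ $\deg A \ge 2\deg B$. Thus if $\deg A$ is odd or $\deg A < 2\deg B$, at most two rational limit cycles can coexist, which is the first half of the theorem.

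In the remaining case every solution has degree $n = \deg A/2$, so the $P_i$ all lie in the real $(2n+1) = (\deg A + 1)$-dimensional space of trigonometric polynomials of degree at most $n$, and the bound follows at once if the $P_i$ are linearly independent. To argue independence I would suppose $\sum_i c_i P_i = 0$ nontrivially; summing $c_i \cdot (\ast)$ gives $(\sum c_i P_i^2)' = -2A \sum c_i$, so either $\sum c_i = 0$ and $\sum c_i P_i^2$ is constant, or $\int_0^{2\pi} A\,dt = 0$. Iterating, by differentiating and substituting $P_i' = -B - A/P_i$ back in, should produce a growing cascade of linear conditions on the $c_i$ that eventually forces $c = 0$. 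An alternative exploits $P \mid A$ directly: each $P_i$ is a real divisor of $A$ of degree $n$ whose cofactor $A/P_i = -P_i' - B$ is prescribed by $P_i$, and one counts the admissible factorizations of $A$ satisfying this differential constraint.

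The main obstacle I expect is precisely this last step. A coefficient-by-coefficient recursion in $(\ast)$, where the top equation is quadratic in the leading coefficient of $P$ while lower coefficients are determined linearly from the ones above, suggests an even tighter bound in nondegenerate configurations, so the $\deg A + 1$ bound must cover degenerate configurations in which the recursion loses rank and discrete branching appears at lower coefficients. Either completing the linear-independence cascade rigorously or classifying the admissible factorizations of $A$ compatible with $Q + P' + B = 0$ is where the technical heart of the proof will lie.
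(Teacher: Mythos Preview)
Your reduction to $x = 1/P$ with $PP' + BP + A = 0$, the degree identity $\deg P_i + \deg P_j = \deg A$ via $AD = P_i P_j D'$, and the conclusion that three or more solutions force $\deg A$ even with every $\deg P_i = \deg A/2$ and $\deg B \le \deg A/2$ are all correct and match the paper's argument (your ``generically'' is an unnecessary hedge: from $A = -P(P'+B)$ one reads off $\deg B \le \deg A/2$ directly once $\deg P = \deg A/2$). The first half is fine.

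The second half has a genuine gap. You target linear independence of the $P_i$, but your cascade stalls at the first step: under $\sum c_i P_i = 0$ and $\sum c_i = 0$ you obtain $\sum c_i P_i^2$ constant, and differentiating that and substituting $P_i' = -B - A/P_i$ returns the tautology $0=0$, so no new constraint is produced. More tellingly, nowhere in your second half do you use that the $1/P_i$ are \emph{limit cycles} rather than merely rational periodic solutions; without that hypothesis the bound is simply false (a center can carry infinitely many rational closed solutions), so any argument that ignores it cannot close.

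The paper's missing idea is Darboux integrability, applied not to the $P_i$ but to the quotients $R_i := A/P_i = -(P_i' + B)$. These $R_i$ are, up to a common term, the cofactors of the invariant curves $1 - P_i(t)x = 0$, and a nontrivial relation $\sum_i \alpha_i R_i = 0$ is precisely the condition for
\[
f(t,x) \;=\; x^{-\sum_i \alpha_i}\,\prod_i \bigl(1 - P_i(t)x\bigr)^{\alpha_i}
\]
to be a first integral of the Abel equation. A first integral forces a center, which is incompatible with the $1/P_i$ being isolated periodic orbits. Hence the $R_i$ are linearly independent; each has degree $\deg A/2$, so they lie in a real vector space of dimension $\deg A + 1$, giving $r \le \deg A + 1$. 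Your ``alternative'' involving the cofactors $A/P_i$ was pointing at the right objects, but the decisive mechanism is Darboux's theorem together with the center/limit-cycle dichotomy, not a combinatorial factorization count.
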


To prove this result, we consider each rational limit cycle $x(t)= Q(t)/P(t)$ as an invariant trigonometric algebraic curve of degree one in $x$ with real trigonometric coefficients (Proposition \ref{prop:key}), that is, an invariant curve of \eqref{eq:Abel} of the form $Q(t)-P(t)x=0$, where $P(t)$ and $Q(t)$ are real trigonometric polynomials and $P(t) \neq 0$ for all $t \in \mathbb{R}$. Therefore, to bound the number of limit rational cycles, we bound the number of invariant algebraic curves of degree one in $x$ with real trigonometric coefficients of \eqref{eq:Abel} such that \eqref{eq:Abel} has no center at the origin. In particular, to prove the second part of Theorem~\ref{th:main}, we bound the maximum number of invariant curves of this type such that \eqref{eq:Abel} does not have a Darboux first integral. 

The study of invariant curves of degree one in $x$ is interesting in itself, regardless of whether they correspond to rational limit cycles or not. Therefore, we include a method to parameterize the Abel equations~\ref{eq:Abel} that have at least two non-trivial invariant algebraic curves of degree one in $x$ with real trigonometric coefficients.

The structure of the paper is as follows. In the first section we characterize the invariant algebraic curves of degree one in $x$ with real trigonometric coefficients such that the equation \eqref{eq:Abel} has and we show some of their properties. Then we prove the first part of Theorem \ref{th:main} and give the parameterization mentioned above. In the second section, we apply Darboux's integrability theory to this situation, which allows us to complete the proof of the main result of the paper.

Due to the fact that $\mathbb{R}[\cos(t),\sin(t)]$ is not a unique factorization domain, some necessary facts and results about factorization in these rings are collected in Appendix \ref{apendice}. 

\section{Real trigonometric invariant algebraic curves of degree one in $x$}

Consider the Abel equation \eqref{eq:Abel}, set $g(t,x):=A(t)x^3+B(t)x^2$ and denote the associated vector field by $\mathcal{X}$, that is,
\[\mathcal{X}=\frac{\partial}{\partial t}+g\frac{\partial}{\partial x}.\]

Fixed $f\in\mathcal{C}^1$, the curve $f(t,x)=0$ is said to be an invariant curve of \eqref{eq:Abel} if there exists $K\in\mathcal{C}^0$, called the cofactor of $f(t,x)$, such that 
$$
\left(\mathcal{X}f\right)(t,x)=\left(\frac{\partial f}{\partial t}+g\frac{\partial f}{\partial x}\right)(t,x)=K(t,x)f(t,x).
$$

Note that $f_0(t,x):=x=0$ is always an invariant curve of \eqref{eq:Abel} with the cofactor $K_0(t,x)=A(t)x^2+B(t)x$. 

If $f(t,x)=0$ is invariant and $x(t)$ is a solution of \eqref{eq:Abel}, then, for any $t_0$ in the domain of the solution,
\[f(t,x(t))=f(t_0,x(t_0))\exp\left(\int_{t_0}^{t} K(s,x(s))ds\right).\]	
Therefore, if $f(t_0,x(t_0))=0$, then $f(t,x(t))=0$ for all $t$. Consequently $f(t,x)=0$ consists of trajectories of solutions of the equation.

From now on, unless otherwise stated, $A(t)$ and $B(t)$ are real trigonometric polynomials, that is, $A(t), B(t) \in \mathbb{R}[\cos(t), \sin(t)]$. 

The first objective of this section is to characterize the invariant algebraic curves $f(t,x) = 0$ of \eqref{eq:Abel} such that $f(t,x) = Q(t) - P(t) x \in \mathbb{R}[\cos(t), \sin(t)][x]$ with $P(t) \neq 0$, for all $t \in \mathbb{R}$. 

Let $P(t)$ and $Q(t)$ be real trigonometric polynomials with $P(t) \neq 0,$ for all $t \in \mathbb{R}$. If $R(t) \in \mathbb{R}[\cos(t),\sin(t)]$ is a common factor of $P(t)$ and $Q(t)$, meaning that there exist factorizations of $P(t)$ and $Q(t)$ in which $R(t)$ appears (see Appendix \ref{apendice} for details), we have that $Q(t) - P(t) x=0$ is an invariant curve of \eqref{eq:Abel} if and only if $Q(t)/R(t) - (P(t)/R(t)) x=0$ is an invariant curve of \eqref{eq:Abel}. Thus, in what follows, we always assume that $P(t)$ and $Q(t)$ have no common factors, that is, $Q(t) - P(t) x$ is irreducible in $\mathbb{R}[\cos(t),\sin(t)][x]$ and, by Corollary \ref{cor:nofactors}, in $\mathbb{C}[\cos(t),\sin(t)][x]$.

\begin{rema}\label{Rem:invcurve}
To simplify the exposition from now on, we simply say invariant curves of \eqref{eq:Abel} to refer to invariant curves of \eqref{eq:Abel} of the form $Q(t) - P(t) x = 0$, where $P(t) \neq 0,$ for all $t \in \mathbb{R}$, and $Q(t) - P(t) x$ is irreducible.
\end{rema}

The following result establishes the relationship between rational limit cycles $x(t)= Q(t)/P(t)$ and invariant curves of \eqref{eq:Abel}.

\begin{prop}\label{prop:key}
Let $P(t)$ and $Q(t)$ be real trigonometric polynomials. If $P(t) \neq 0$ for all $t \in \mathbb{R}$, then $x(t)=Q(t)/P(t)$ is a solution of \eqref{eq:Abel} if and only if $Q(t) - P(t) x=0$ is an invariant curve of \eqref{eq:Abel}.
\end{prop}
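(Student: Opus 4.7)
The plan is to take $f(t,x) = Q(t) - P(t)\,x$ and reduce both sides of the equivalence to the vanishing of one and the same trigonometric identity. I would first compute
\[
(\mathcal{X}f)(t,x) = Q'(t) - P'(t)\,x - P(t)\bigl(A(t)x^3 + B(t)x^2\bigr),
\]
a polynomial of degree three in $x$ with coefficients in $\mathbb{R}[\cos t, \sin t]$. Direct substitution $x = Q(t)/P(t)$ (licit because $P(t) \neq 0$) followed by multiplication by $P(t)^2$ gives
\[
P(t)^2\,(\mathcal{X}f)\bigl(t, Q(t)/P(t)\bigr) = P(t)^2 Q'(t) - P(t)P'(t)Q(t) - A(t)Q(t)^3 - B(t)P(t)Q(t)^2,
\]
which is exactly what one obtains by clearing the denominator $P(t)^3$ in $(Q/P)' = A(Q/P)^3 + B(Q/P)^2$. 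Thus the statements ``$x = Q/P$ solves \eqref{eq:Abel}'' and ``$(\mathcal{X}f)(t, Q(t)/P(t)) = 0$ for all $t$'' are equivalent, and this pivot does most of the work.

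For the forward implication, once we know $\mathcal{X}f$ vanishes along $\{x = Q/P\}$, I would perform the Euclidean division of $\mathcal{X}f$ by $f$ in the variable $x$, working in the field of fractions of the ring of real-valued functions of $t$. Since $f$ has degree one in $x$ with leading coefficient $-P(t)$, the division yields $\mathcal{X}f = K(t,x)\,f(t,x) + r(t)$, where $K$ is a polynomial of degree two in $x$ whose coefficients are rational expressions in $P, P', Q, A, B$ with denominators equal to powers of $P$, and $r$ depends only on $t$. Setting $x = Q/P$ in this identity forces $r \equiv 0$, so $\mathcal{X}f = K\,f$; because $P$ is nowhere zero, the coefficients of $K$ are continuous on $\mathbb{R}$, whence $K \in \mathcal{C}^0$ and the definition of invariant curve is met.

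The converse is then essentially free: if $\mathcal{X}f = K\,f$ for some continuous $K$, then evaluating at the point $(t, Q(t)/P(t)) \in \{f = 0\}$ gives $(\mathcal{X}f)(t, Q(t)/P(t)) = 0$ for every $t$, which by the initial computation is precisely the identity asserting that $Q/P$ solves \eqref{eq:Abel}. The whole argument is largely bookkeeping; the only spot where the hypothesis $P(t) \neq 0$ is genuinely used is to ensure that the cofactor $K$ produced by the polynomial division is continuous on all of $\mathbb{R}$.
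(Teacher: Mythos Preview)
Your proof is correct and follows essentially the same approach as the paper's: both perform the Euclidean division of $\mathcal{X}f$ by $f$ in the variable $x$ (over the field of fractions of the coefficient ring), identify the degree-zero remainder with the residual of the ODE for $Q/P$, and use $P(t)\neq 0$ to ensure the resulting cofactor is continuous. The only difference is organizational---the paper writes out $Z(t)$ and $K(t,x)$ explicitly and reads off both directions at once, whereas you isolate the ``pivot'' $(\mathcal{X}f)(t,Q/P)=0$ first and then treat the two implications separately.
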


\begin{proof}
Since $\mathbb{R}[\cos(t), \sin(t)]$ is a domain (see \cite[Theorem 3.1]{P-PH}) its field of fractions, $\Sigma$, is well-defined. Thus, we can perform the Euclidean division of $(\mathcal{X} f)(t,x) = Q'(t)-P'(t) x-P(t)\Big(A(t)x^3+B(t)x^2\Big)$ by $f(t,x):=Q(t) - P(t) x$ in $\Sigma[x]$, so that there exist unique $K(t,x)$ and $Z(t,x) \in \Sigma[x]$ such that \[(\mathcal{X} f)(t,x) = K(t,x)(Q(t) - P(t) x) + Z(t).\] Concretely, 
\[Z(t) = P(t) \left( \left(\frac{Q(t)}{P(t)} \right)' - A(t) \left(\frac{Q(t)}{P(t)} \right)^3 - B(t) \left(\frac{Q(t)}{P(t)} \right)^2 \right) \]
and \begin{align*} 
K(t,x) = & A(t) x^2 + \left(A(t) \left(\frac{Q(t)}{P(t)} \right) + B(t) \right) x\\ & + A(t) \left(\frac{Q(t)}{P(t)} \right)^2 + B(t) \left(\frac{Q(t)}{P(t)} \right) + \frac{P'(t)}{P(t)}.
\end{align*} 
Note that $K(t,x) \in \mathcal{C}^0$ because $P(t) \neq 0$ for all $t \in \mathbb{R}$. Therefore, we conclude that the necessary and sufficient condition for $Q(t) - P(t) x = 0$ to be an invariant curve of \eqref{eq:Abel} is $Z(t) = 0$, which, given the expression of $Z(t)$, is equivalent to $x(t)=Q(t)/P(t)$ being a solution of \eqref{eq:Abel}.
\end{proof}

Now, our approach is as follows: instead of directly bounding the number of rational limit cycles of \eqref{eq:Abel}, we bound the number of invariant curves of degree one in $x$ such that \eqref{eq:Abel} has not a center. According to Proposition \ref{prop:key}, this will be an upper bound on the number of rational limit cycles. 

Next we give a condition so that $Q(t)-P(t)x=0$ is an invariant curve of \eqref{eq:Abel}. But first we need a lemma.

\begin{lema}\label{lema1}
Let $P(t)$ and $Q(t)$ be real trigonometric polynomials with $P(t) \neq 0$ for all $t \in \mathbb{R}$. If $Q(t) - P(t) x = 0$ is an invariant curve of \eqref{eq:Abel}, then the corresponding cofactor is a polynomial in $x$ with real trigonometric polynomial coefficients. 
\end{lema}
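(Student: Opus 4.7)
The plan is to start from the explicit formula
\[
K(t,x) = A(t)\,x^2 + K_1(t)\,x + K_0(t), \quad K_1 = \frac{A Q}{P} + B, \quad K_0 = \frac{A Q^2}{P^2} + \frac{B Q}{P} + \frac{P'}{P},
\]
already produced by the Euclidean division carried out in the proof of Proposition~\ref{prop:key}. A priori this $K$ lives in $\Sigma[x]$. Since its $x^2$-coefficient is the trigonometric polynomial $A(t)$, the task reduces to showing $K_1, K_0 \in \mathbb{R}[\cos t,\sin t]$.

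Because $\mathbb{R}[\cos t,\sin t]$ is not a UFD, I would complexify and work in $\mathbb{C}[\cos t,\sin t]$, which, through $u = e^{it}$, is isomorphic to the Laurent polynomial ring $\mathbb{C}[u,u^{-1}]$, a principal ideal domain. By Corollary~\ref{cor:nofactors}, $P$ and $Q$ remain coprime there. Using that $x = Q/P$ solves \eqref{eq:Abel} (Proposition~\ref{prop:key}) and clearing denominators gives the polynomial identity
\[
P\bigl(P Q' - P' Q - B Q^2\bigr) = A Q^3
\]
in $\mathbb{R}[\cos t,\sin t]$. Read in the UFD $\mathbb{C}[u,u^{-1}]$, this yields $P \mid A Q^3$, and coprimality with $Q$ forces $P \mid A$. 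Hence $\tilde A := A/P \in \mathbb{C}[u,u^{-1}]$; being real-valued on the unit circle (since $A$, $P$ are real and $P \neq 0$ on $\mathbb{R}$), it lies in $\mathbb{R}[\cos t,\sin t]$, and therefore $K_1 = \tilde A\,Q + B \in \mathbb{R}[\cos t,\sin t]$.

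For $K_0$, I would use the two relations obtained by equating the $x^1$ and $x^0$ coefficients of $\mathcal{X}f = Kf$, namely $P K_0 = K_1 Q + P'$ and $Q K_0 = Q'$. The first puts $P K_0$ in $\mathbb{R}[\cos t,\sin t]$ (using what was just shown for $K_1$), and the second does the same for $Q K_0$. Since $\mathbb{C}[u,u^{-1}]$ is a PID, B\'ezout supplies Laurent polynomials $\alpha,\beta$ with $\alpha P + \beta Q = 1$, so
\[
K_0 = \alpha\,(P K_0) + \beta\,(Q K_0) \in \mathbb{C}[u,u^{-1}],
\]
and real-valuedness again promotes $K_0$ to $\mathbb{R}[\cos t,\sin t]$.

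The main obstacle is precisely the failure of unique factorization in $\mathbb{R}[\cos t,\sin t]$, which blocks naive divisibility reasoning. The whole argument hinges on using Corollary~\ref{cor:nofactors} and the appendix to transfer everything to the UFD/PID $\mathbb{C}[u,u^{-1}]$, where the divisibility step for $K_1$ and the B\'ezout step for $K_0$ both go through cleanly.
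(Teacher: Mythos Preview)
Your argument is correct. Both you and the paper complexify and invoke Corollary~\ref{cor:nofactors} to work in a UFD, but the mechanics differ. The paper stays in the two-variable ring $\mathbb{C}[\cos t,\sin t][x]$: from $P^2\,(\mathcal{X}f)=\tilde K\,f$ with $\tilde K\in\mathbb{R}[\cos t,\sin t][x]$ and the irreducibility of $f$ (which, having $x$-degree $1$, cannot divide $P^2$), it concludes in one stroke that $f\mid\mathcal{X}f$ in $\mathbb{C}[\cos t,\sin t][x]$, and then observes that the quotient must have real coefficients. You instead peel off the $x$-coefficients and argue entirely in the one-variable PID $\mathbb{C}[u,u^{-1}]$: first $P\mid A$ (via $P\mid AQ^3$ and $\gcd(P,Q)=1$) handles $K_1$, and then a B\'ezout combination of $PK_0$ and $QK_0$ handles $K_0$. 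The paper's route is shorter and more conceptual; yours is a bit more hands-on but has the bonus of proving $P\mid A$ and the relation $QK_0=Q'$ along the way, which is exactly the content exploited next in Proposition~\ref{prop:inv}. One small point worth making explicit in your write-up is why ``real-valued on the unit circle'' forces membership in $\mathbb{R}[\cos t,\sin t]$ (equivalently, apply the involution $g(u)\mapsto\overline{g}(u^{-1})$ to $\tilde A\,P=A$).
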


\begin{proof}
Let $f(t,x):=Q(t) - P(t) x$. Arguing as in the proof of Proposition \ref{prop:key}, we obtain that there exists $\tilde K(t,x) \in \mathbb{R}[\cos(t), \sin(t)][x]$ such that $P(t)^2 (\mathcal Xf)(t,x) = \tilde K(t,x) f(t,x)$. 

By Corollary \ref{cor:nofactors}, $f(t,x)$ is irreducible in $\mathbb{C}[\cos(t), \sin(t)][x]$. Moreover, since $\mathbb{C}[\cos(t), \sin(t)]$ is an Euclidean domain (see \cite[Theorem 2.1]{P-PH}), it is an unique factorization domain and therefore $\mathbb{C}[\cos(t), \sin(t)][x]$ is also an unique factorization domain. Thus, we have that $P(t)^2$ or $ (\mathcal Xf)(t,x)$ are divisible by $f(t,x)$ which necessarily implies that there exists $H(t,x) \in \mathbb{C}[\cos(t),\sin(t)][x]$ such that $(\mathcal Xf)(t,x) = H(t,x) f(t,x)$ for degree reasons. 

Finally, since both $(\mathcal Xf)(t,x)$ and $f(t,x)$ are polynomials in $x$ with real trigonometric polynomial coefficients, we conclude that $H(t,x)$ is also a polynomial in $x$ with real trigonometric polynomial coefficients.
\end{proof}

The next result, which gives the condition for $Q(t)-P(t)x=0$ to be an invariant curve of \eqref{eq:Abel}, is proved in \cite{LLWW} for the polynomial case, and in \cite{V22} for the trigonometric case. A simplified proof is included. 

\begin{prop}\label{prop:inv} 
The curve $Q(t)-P(t)x=0$ is an invariant curve of \eqref{eq:Abel} if and only if $Q(t) = c \in \mathbb{R}$ and there exists a trigonometric polynomial $R(t)$ such that \[A(t)=(P(t)/c) R(t), \quad B(t)=-P'(t)/c-R(t).\] In this case, the corresponding cofactor is equal to $A(t) x^2 - (P'(t)/c) x$.
\end{prop}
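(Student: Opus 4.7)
The plan is to set $f(t,x) := Q(t) - P(t) x$ and exploit Lemma~\ref{lema1}, which tells us that the cofactor $K$ of an invariant curve of this form has trigonometric polynomial coefficients in $x$. Since $\mathcal{X} f$ has degree $3$ in $x$ and $f$ has degree $1$, the cofactor must have degree $2$: write $K(t,x) = \alpha(t) x^2 + \beta(t) x + \gamma(t)$. Expanding
\begin{align*}
\mathcal{X} f &= -P(t)A(t) x^3 - P(t)B(t) x^2 - P'(t) x + Q'(t), \\
K f &= -\alpha P x^3 + (\alpha Q - \beta P) x^2 + (\beta Q - \gamma P) x + \gamma Q,
\end{align*}
and matching coefficients in $x$ yields the system $\alpha = A$, $\alpha Q - \beta P = -BP$, $\beta Q - \gamma P = -P'$, and $\gamma Q = Q'$.

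The main obstacle, and the step that makes the proposition go through, is showing from $\gamma Q = Q'$ that $Q$ must reduce to a nonzero real constant. I would argue by degrees in the complex trigonometric ring $\mathbb{C}[\cos(t),\sin(t)] \cong \mathbb{C}[e^{it}, e^{-it}]$: writing $Q = \sum_{|k|\leq n} c_k e^{ikt}$ with $c_n \neq 0$, the derivative $Q'$ has the same degree $n$ whenever $n \geq 1$, while $\gamma Q$ has degree $n + \deg(\gamma)$. Hence $\gamma$ must be a real constant $\lambda$, and the relation $Q' = \lambda Q$ gives $i k c_k = \lambda c_k$ for every $k$; since $\lambda$ is real and $ik$ is purely imaginary for $k \neq 0$, this forces $c_k = 0$ for $k \neq 0$, and when $\lambda \neq 0$ also $c_0 = 0$. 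Discarding the trivial invariant curve $x = 0$, we conclude $\gamma = 0$ and $Q = c \in \mathbb{R} \setminus \{0\}$.

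With $\gamma = 0$ and $Q = c$ in hand, the remaining equations become routine algebra. The third equation gives $\beta = -P'/c$; substituting into the second yields $A c + P' P / c = -BP$, i.e., $A = -(P/c)(B + P'/c)$. Setting $R := -B - P'/c \in \mathbb{R}[\cos(t),\sin(t)]$ produces $A = (P/c) R$ and $B = -P'/c - R$, and the cofactor reads $K = A x^2 - (P'/c) x$, as claimed. The converse is a direct verification: plugging these formulas into $\mathcal{X}(c - Px)$ and $(A x^2 - (P'/c) x)(c - Px)$, both expressions collapse to $-P A x^3 - P B x^2 - P' x$, so $f = c - Px$ is indeed invariant with the asserted cofactor.
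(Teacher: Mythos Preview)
Your proof is correct and follows essentially the same overall strategy as the paper: invoke Lemma~\ref{lema1} to ensure the cofactor lies in $\mathbb{R}[\cos(t),\sin(t)][x]$, extract the relation $Q'=\gamma Q$ with $\gamma$ a trigonometric polynomial, use a degree count to force $\gamma$ constant, and conclude $Q$ is constant.

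The execution differs in two minor respects. First, the paper does not expand $\mathcal{X}f=Kf$ and match coefficients from scratch; instead it recycles the explicit cofactor formula already computed in the proof of Proposition~\ref{prop:key}, which immediately exhibits the constant term as $Q'(t)/Q(t)$. Second, to pass from $Q'=\lambda Q$ with $\lambda\in\mathbb{R}$ to $Q$ constant, the paper integrates to get $|Q(t)|=e^{\lambda t+\text{const}}$ and appeals to periodicity, whereas you work with Fourier coefficients and the identity $ikc_k=\lambda c_k$. Your coefficient-matching route is slightly more self-contained (it does not lean on the earlier Euclidean-division computation), while the paper's route is a bit shorter because that computation has already been done. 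Both are equally valid.
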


\begin{proof}
Let $f(t,x):=Q(t)+P(t)x=0$ be an invariant curve of \eqref{eq:Abel}. Arguing as in the proof of Proposition \ref{prop:key} and taking advantage of the fact that $x(t) = Q(t)/P(t)$ is a solution of \eqref{eq:Abel}, we have that the corresponding cofactor can be written as
\[
K(t,x) = A(t) x^2 - \left(\frac{P(t)}{Q(t)}\right)' x + \frac{Q'(t)}{Q(t)}.
\]
Since, by Lemma \ref{lema1}, $K(t,x) \in \mathbb{R}[\cos(t),\sin(t) ][x]$, we have in particular that $Q'(t) = K_0(t) Q(t)$ for some $K_0(t) \in \mathbb{R}[\cos(t),\sin(t)]$. Comparing degrees,  either $Q(t) = 0$ or $K_0$ is constant. In the latter case, \[\vert Q(t) \vert = e^{\int K_0 dt}.\] Thus we conclude that $K_0=0$, and $Q(t) = c \in \mathbb{R}$. Note that, in this case, $K(t,x) = A(t) x^2 - (P'(t)/c) x$.

Finally, noting that 
\[-\frac{P'(t)}{c} = A(t) \left(\frac{c}{P(t)} \right) + B(t)\]
we conclude that $B(t) = -P'(t)/c - R(t)$ where $R(t) = c A(t)/P(t) \in \mathbb{R}[\cos(t),\sin(t)]$.

The converse follows by direct checking.
\end{proof}

As mentioned in the proof of Proposition~\ref{prop:inv}, the curve $c-P(t)x=0$ is an invariant curve of the equation \eqref{eq:Abel} if and only if
\[-\frac{P'(t)}{c} = A(t) \left(\frac{c}{P(t)} \right) + B(t).\] Without loss of generality, we can assume $c=1$, so that
\begin{equation}
P(t)P'(t)+P(t)B(t)+A(t)=0.\label{CondInv}
\end{equation}

Note that if equation \eqref{eq:Abel} has an invariant curve of the form $1-Kx=0$ with $K$ a non-zero constant, then the Abel equation becomes the separated variable equation $x'=B(t)x^2(-Kx+1)$ with constant solutions $0$ and $1/K$. If $\int_0^{2\pi} B(t)\,dt\neq 0$ these constant solutions are the unique limit cycles, while if $\int_0^{2\pi} B(t)\,dt= 0$ every bounded solution is periodic, so it has no limit cycles. Hence, we consider only the case $\deg(P)\geq 1$.

We will say that an invariant curve $1-P(t)x=0$ has degree $n$ if $\deg(P)=n$. Next we prove that the sum of the degrees of two invariant curves is the degree of $A$.

\begin{prop}\label{prop2c}
If $1 - P_1(t) x =0$ and $1 - P_2(t) x = 0$ are two different invariant curves of \eqref{eq:Abel}, then $\deg(P_1)+\deg(P_2)=\deg(A)$. Consequently, if $\deg(P_1) = \deg(P_2),$ then $\deg(P_1) = \deg(P_2) = \deg(A)/2$.
\end{prop}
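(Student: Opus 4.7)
The plan is to apply Proposition \ref{prop:inv} separately to each of the two invariant curves $1 - P_i(t)x = 0$, $i=1,2$, obtaining trigonometric polynomials $R_i(t)$ with
\[A(t) = P_i(t)\,R_i(t), \qquad B(t) = -P_i'(t) - R_i(t).\]
The strategy is to combine these two relations into a single identity involving only $P_1, P_2, R_1$ and then read off the claim from a degree count.

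First I would equate the two expressions for $B$ to get $R_2 - R_1 = (P_1 - P_2)'$. Substituting $R_2 = R_1 + (P_1 - P_2)'$ into the equality $P_1 R_1 = P_2 R_2$ and rearranging yields the key identity
\[(P_1 - P_2)\,R_1 \;=\; P_2\,(P_1 - P_2)'.\]

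Before extracting degrees I would rule out the degenerate case in which $P_1 - P_2$ is a non-zero constant. If it were, then $(P_1 - P_2)' = 0$ and the identity would force $R_1 = 0$ (using $P_2 \not\equiv 0$), whence $A = P_1 R_1 \equiv 0$, contradicting that \eqref{eq:Abel} is genuinely an Abel equation. Thus $P_1 - P_2$ is a non-constant trigonometric polynomial, and since differentiation preserves the degree of non-constant trigonometric polynomials, $\deg\bigl((P_1-P_2)'\bigr) = \deg(P_1 - P_2)$. The same identity also shows $R_1 \not\equiv 0$, since otherwise $P_2(P_1 - P_2)'$ would vanish.

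Because the degree function is additive on products in $\mathbb{R}[\cos(t), \sin(t)]$ (via the embedding into the Laurent polynomial ring $\mathbb{C}[e^{it}, e^{-it}]$), taking degrees on both sides of the key identity gives
\[\deg(P_1 - P_2) + \deg(R_1) \;=\; \deg(P_2) + \deg(P_1 - P_2),\]
so $\deg(R_1) = \deg(P_2)$. Combining this with $A = P_1 R_1$ yields $\deg(A) = \deg(P_1) + \deg(P_2)$, from which the equal-degree consequence is immediate. The only real obstacle is handling the constant-difference degeneracy, and this is dispatched cleanly via the hypothesis $A \not\equiv 0$.
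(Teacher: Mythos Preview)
Your proof is correct and follows essentially the same route as the paper: apply Proposition~\ref{prop:inv} to both curves, combine the resulting relations into the identity $(P_1-P_2)\,R_1 = P_2\,(P_1-P_2)'$ (the paper derives the symmetric version $P_1\,(P_2-P_1)' = R_2\,(P_2-P_1)$), and read off the degree equality. Your treatment is in fact slightly more careful than the paper's, since you explicitly dispose of the degenerate case in which $P_1-P_2$ is a non-zero constant before invoking $\deg\bigl((P_1-P_2)'\bigr)=\deg(P_1-P_2)$.
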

 
\begin{proof}
By Proposition \ref{prop:inv}, there exist trigonometric polynomials $R_1(t)$ and $R_2(t)$ such that $P_1(t)R_1(t) = A(t) = P_2(t)R_2(t)$ and $-P'_1(t) - R_1(t) = B(t) =  -P'_2(t) - R_2(t)$. Thus, \[P_1(t)(P'_2(t)+R_2(t)-P'_1(t)) = A(t) = P_2(t)R_2(t).\]
Therefore 
\[P_1(t)(P_2(t)-P_1(t))' = P_1(t)(P'_2(t)-P'_1(t)) = R_2(t) (P_2(t)-P_1(t)).\]
Now, since $\deg(P_2(t)-P_1(t)) = \deg((P_2(t)-P_1(t))') $, we conclude that $\deg(P_1) = \deg(R_2) = \deg(A)-\deg(P_2)$, from which our claim follows. 
\end{proof}

The following example shows that \eqref{eq:Abel} can have two limit cycles of different degrees. 

\begin{exam}\label{ejemplo1}
Let $P_1(t) = 2 (\cos(t)+2)(\sin(t)+2)$ and $P_2(t)=(\cos(t)+2)(\sin(t)+2)(\sin(t)+4)$. By Proposition \ref{prop:inv}, we have that $1-P_i(t) x,\ i = 1,2$ are invariant curves of \eqref{eq:Abel} for 
\begin{align*}
    A(t) = (\cos(t)+2)(\sin(t)+2)(\sin(t)+4)(3\cos(2t)+8\cos(t)-4\sin(t)+1)
\end{align*}
and 
\begin{align*}
B(t) = -3/4 \sin(3t)-9\cos(2t)-2 \sin(2t)-20\cos(t)+49/4 \sin(t) -1.
\end{align*}
Note that $5=\deg(A) = \deg(P_1) + \deg(P_2)=2+3$. Moreover, since $\int_0^{2\pi} B(t)\,dt = -2 \pi \neq 0$, \eqref{eq:Abel} does not have a center (see, for instance, \cite[Lemma~7]{AGG}), so the solutions $x(t)=1/P_1(t)$ and $x(t)=1/P_2(t)$ are limit cycles.
\end{exam}

Remember that $x(t) = 0$ is always an invariant curve of \eqref{eq:Abel}. It corresponds to the case $Q(t) = 0$ and we call it a trivial invariant curve.

\begin{coro}\label{coro3c}
If equation \eqref{eq:Abel} has three or more non-trivial invariant curves, then they all have degree $\deg(A)/2$.
\end{coro}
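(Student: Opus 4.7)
The plan is to deduce this as an immediate algebraic consequence of Proposition \ref{prop2c}. Suppose that \eqref{eq:Abel} admits three distinct non-trivial invariant curves $1 - P_i(t) x = 0$, for $i = 1, 2, 3$. I would apply Proposition \ref{prop2c} to each of the three pairs $(P_1,P_2)$, $(P_1,P_3)$, $(P_2,P_3)$ to obtain the three equalities
\[
\deg(P_1)+\deg(P_2) = \deg(P_1)+\deg(P_3) = \deg(P_2)+\deg(P_3) = \deg(A).
\]
From the first two equalities I get $\deg(P_2)=\deg(P_3)$, and combining with the third yields $\deg(P_1)=\deg(P_2)=\deg(P_3)$. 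Substituting back into any one of the equalities gives $2\deg(P_i) = \deg(A)$, hence $\deg(P_i)=\deg(A)/2$ for $i=1,2,3$.

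For an arbitrary fourth or further non-trivial invariant curve $1-P_j(t)x=0$, the same argument applied to the pair $(P_1,P_j)$ together with $\deg(P_1)=\deg(A)/2$ forces $\deg(P_j)=\deg(A)/2$ as well. Thus every non-trivial invariant curve in such a configuration has degree $\deg(A)/2$.

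There is no real obstacle here: the statement follows purely combinatorially from the pairwise degree identity in Proposition \ref{prop2c}, with no new analytic or algebraic input required. The only thing worth noting is that the conclusion implicitly requires $\deg(A)$ to be even whenever three or more non-trivial invariant curves coexist, which will be relevant for the odd-degree case of Theorem \ref{th:main}.
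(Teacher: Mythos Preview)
Your proof is correct and follows essentially the same approach as the paper: apply Proposition~\ref{prop2c} to each pair of invariant curves to obtain the three equalities $\deg(P_i)+\deg(P_j)=\deg(A)$ and solve. Your additional remark covering a fourth or further invariant curve is a harmless elaboration of what the paper leaves implicit.
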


\begin{proof}
Suppose that equation \eqref{eq:Abel} has three invariant curves $1-P_1(t)x=0$, $1-P_2(t)x=0$ and $1-P_3(t)x=0$. Then, by Proposition \ref{prop2c}, $\deg(P_1)+\deg(P_2)=\deg(P_1)+\deg(P_3)=\deg(P_2)+\deg(P_3)=\deg(A)$, which implies $\deg(P_1)=\deg(P_2)=\deg(P_3)=\deg(A)/2$. 
\end{proof}

Now, it is easy to give two conditions for equation \eqref{eq:Abel} to have at most two non-trivial invariant curves, which proves the first part of the main theorem (Theorem \ref{th:main}). 

\begin{coro}\label{corosuf2c}
If $\deg(A)$ is odd or if $\deg(B)>\deg(A)/2$, then \eqref{eq:Abel} has has at most two non-trivial invariant curves.
\end{coro}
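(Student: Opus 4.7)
The proposal is to derive both implications as immediate consequences of the structural results already established, in particular Corollary \ref{coro3c} and Proposition \ref{prop:inv}. The strategy is proof by contradiction: assume \eqref{eq:Abel} has three or more non-trivial invariant curves and show that each of the two hypotheses is violated.

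First I would handle the case where $\deg(A)$ is odd. If there are three or more non-trivial invariant curves, Corollary \ref{coro3c} asserts that each of them has degree $\deg(A)/2$. Since the degree of a trigonometric polynomial must be a nonnegative integer, $\deg(A)/2 \in \mathbb{Z}_{\geq 0}$ forces $\deg(A)$ to be even, contradicting the first hypothesis. This disposes of the odd case in one line.

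For the second case, assume three non-trivial invariant curves $1 - P_i(t) x = 0$, $i = 1, 2, 3$, exist. By Corollary \ref{coro3c} again, $\deg(P_i) = \deg(A)/2$ for each $i$. By Proposition \ref{prop:inv} applied to any of these curves (say $i = 1$), there is a trigonometric polynomial $R_1(t)$ with $A(t) = P_1(t) R_1(t)$ and $B(t) = -P_1'(t) - R_1(t)$. The factorization forces $\deg(R_1) = \deg(A) - \deg(P_1) = \deg(A)/2$, while $\deg(P_1') = \deg(P_1) = \deg(A)/2$. Consequently
\[
\deg(B) \leq \max\bigl\{\deg(P_1'),\, \deg(R_1)\bigr\} = \deg(A)/2,
\]
which contradicts the hypothesis $\deg(B) > \deg(A)/2$. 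Therefore at most two non-trivial invariant curves can exist.

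I do not foresee any real obstacle here, since all the work has been carried out in Proposition \ref{prop2c} and Corollary \ref{coro3c}: both hypotheses are designed to block the common conclusion $\deg(P_i) = \deg(A)/2$. The only point that might deserve a brief remark is that in the second case one should verify that the bound $\deg(B) \leq \deg(A)/2$ is sharp in the sense that no miraculous cancellation between $-P_1'(t)$ and $-R_1(t)$ can push $\deg(B)$ above $\deg(A)/2$; but this is automatic because the bound comes from the degrees of the individual summands, not from their sum.
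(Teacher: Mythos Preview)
Your proof is correct and follows essentially the same route as the paper: both cases are handled by contradiction via Corollary~\ref{coro3c}, and in the second case the degree bound $\deg(B)\le\max\{\deg(P'),\deg(R)\}=\deg(A)/2$ is obtained from the decomposition $B=-P'-R$ in Proposition~\ref{prop:inv}, exactly as the paper does. Your final remark about cancellation is superfluous (cancellation can only lower a degree, never raise it), but harmless.
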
 

\begin{proof}
If $\deg(A)$ is odd the claim follows directly from Corollary \ref{coro3c}. 

So, suppose that $\deg(A)$ is even and $\deg(B) > \deg(A)/2$. If $1-P(t)x=0$ is an invariant curve of \eqref{eq:Abel}, then by Proposition \ref{prop:inv} there exists $R(t) \in \mathbb{R}[\cos(t),\sin(t)]$ such that $A(t) = P(t)R(t)$ and $B(t) = -P'(t) - R(t)$, then $\deg(B)\leq\max\{\deg(P) = \deg(P'),\deg(A)-\deg(P)\}$. Thus if $\deg(P)=\deg(A)/2$, then $\deg(B)\leq\deg(A)/2$, contradicting the hypothesis. This fact, together with Proposition~\ref{prop2c} and Corollary~\ref{coro3c}, completes the proof of the claim. 
\end{proof}

We can now write this last result in terms of rational limit cycles.

\begin{coro}\label{corosuf2cl}
If  $\deg(A)$ is odd or if $\deg(B)>\deg(A)/2$, then \eqref{eq:Abel} has has at most two non-trivial rational limit cycles.
\end{coro}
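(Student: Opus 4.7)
The plan is to derive this statement as an immediate consequence of Corollary \ref{corosuf2c} by translating from invariant curves to rational limit cycles via Proposition \ref{prop:key}. The substantive work has already been done; what remains is a short bookkeeping argument.

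First I would observe that any non-trivial rational limit cycle $x(t)=Q(t)/P(t)$ of \eqref{eq:Abel}, with $Q$ and $P$ coprime trigonometric polynomials and $P(t)\neq 0$ for all $t$, produces the invariant curve $Q(t)-P(t)x=0$ by Proposition \ref{prop:key}. Applying Proposition \ref{prop:inv} to this curve forces $Q(t)$ to be a nonzero constant $c$; dividing through by $c$, the curve takes the normalized form $1-\tilde P(t)x=0$ with $\tilde P(t)=P(t)/c$. As discussed in the paragraph following \eqref{CondInv}, the case $\deg\tilde P=0$ yields a separated-variable equation whose nontrivial periodic solutions are never isolated, so constant $\tilde P$ contributes nothing to the limit-cycle count and we may restrict to invariant curves of degree at least one.

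Next I would check that this assignment is injective: the rational function $x(t)=1/\tilde P(t)$ determines $\tilde P$ uniquely, so two distinct non-trivial rational limit cycles produce two distinct non-trivial invariant curves in the sense of Remark \ref{Rem:invcurve}. Hence the number of non-trivial rational limit cycles of \eqref{eq:Abel} is bounded above by the number of non-trivial invariant curves of the form $1-P(t)x=0$. Invoking Corollary \ref{corosuf2c} under either of the hypotheses ``$\deg(A)$ is odd'' or ``$\deg(B)>\deg(A)/2$'' gives the bound of two, completing the proof.

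There is no genuine obstacle here, since every step is an essentially automatic translation and the heavy lifting — characterizing invariant curves in Proposition \ref{prop:inv} and bounding their number in Corollary \ref{corosuf2c} — has already been done. The only minor care points are the normalization $c=1$ and the observation that constant $\tilde P$ can be excluded from the limit-cycle count.
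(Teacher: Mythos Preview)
Your approach is exactly the paper's: Corollary~\ref{corosuf2cl} is stated immediately after Corollary~\ref{corosuf2c} as its direct translation into rational limit cycles via Proposition~\ref{prop:key}, with no further argument given. Your bookkeeping (normalizing $Q=c$ to $c=1$, injectivity of the correspondence) is accurate and makes explicit what the paper leaves implicit.

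One small misreading, however: the paragraph following \eqref{CondInv} does \emph{not} say that in the separated-variable case the nontrivial periodic solutions are never isolated. It says that if $\int_0^{2\pi}B(t)\,dt\neq 0$ then $x=0$ and $x=1/K$ \emph{are} the unique limit cycles, whereas if the integral vanishes there are no limit cycles. So a degree-zero invariant curve can correspond to a genuine rational limit cycle; it is just that in this degenerate situation the equation has at most \emph{one} non-trivial rational limit cycle, so the bound of two holds trivially. With this correction your exclusion of the constant-$\tilde P$ case is justified and the rest of your argument goes through unchanged.
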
 

In \cite{BCFO} a parameterization is given for all cases of equations $x' = A(t) x^3 + B(t) x^2,\ A(t), B(t) \in \mathbb{C}[t]$ which have at least two non-trivial polynomial polynomial invariant curves. Before we finish proving the main result of the paper in the next section, let us see that a similar parametrization of the rational limit cycles can be obtained in this case.

\begin{prop}\label{prop:param}
Equation \ref{eq:Abel} has two different non-trivial invariant cur\-ves if and only if there exist $G(t), \hat G(t),S_1(t) \in \mathbb{R}[\cos(t),\sin(t)]$ and $k \in \mathbb{R} \setminus \{0\}$, such that $G(t),S_1(t),S_1(t)+k\hat G(t) \neq 0$ for all $t \in \mathbb{R}$, every irreducible factor of $\hat G(t)$ divides $G(t)$, and the functions $A,B$ satisfy \[A(t) = G(t) S_1(t) (S_1(t)+k \hat G(t)) \left(G'(t) + \frac{G(t) \hat G'(t)}{\hat G(t)}\right),\] \[B(t) = -(G(t) S_1(t))' - (S_1(t)+k \hat G(t)) \left(G'(t) + \frac{G(t) \hat G'(t)}{\hat G(t)}\right),\]  Furthermore, in this case the two different invariant curves are \[1 -  G(t) S_1(t) x = 0\quad\text{and}\quad 1 -  G(t) (S_1(t)+k \hat G(t) ) x = 0.\]
\end{prop}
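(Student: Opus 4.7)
My plan is to prove both implications by translating each invariant curve into the cofactor data of Proposition \ref{prop:inv}. For the \emph{if} direction I would set $P_1 := G S_1$, $P_2 := G(S_1+k\hat G)$, and $T := G'+G\hat G'/\hat G = (G\hat G)'/\hat G$. The hypothesis that every irreducible factor of $\hat G$ divides $G$ ensures, after passing to the UFD $\mathbb{C}[\cos(t),\sin(t)]$ (Appendix \ref{apendice}), that $T$ is a genuine trigonometric polynomial; this uses the fact that in this ring (identified with $\mathbb{C}[z,z^{-1}]$) no irreducible element divides its own trigonometric derivative. A direct computation then shows that $R_1 := (S_1+k\hat G)\,T$ and $R_2 := S_1 T$ satisfy $A = P_1 R_1 = P_2 R_2$ and $B = -P_1'-R_1 = -P_2'-R_2$, so Proposition \ref{prop:inv} produces the two invariant curves; they are distinct because $k\neq 0$ and $\hat G\neq 0$, and nowhere vanishing by the hypotheses on $G$, $S_1$, and $S_1+k\hat G$.

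For the \emph{only if} direction, assume two distinct non-trivial invariant curves $1-P_i(t)x=0$, $i=1,2$. By Proposition \ref{prop:inv} there exist $R_1, R_2\in\mathbb{R}[\cos(t),\sin(t)]$ with $A = P_i R_i$ and $B = -P_i' - R_i$, and the manipulation in the proof of Proposition \ref{prop2c} yields $P_1 V' = R_2 V$, where $V := P_2-P_1 \neq 0$. Working in $\mathbb{C}[\cos(t),\sin(t)]$, I would let $G$ be a greatest common divisor of $P_1$ and $V$ (chosen in $\mathbb{R}[\cos(t),\sin(t)]$, which is possible since $P_1$ and $V$ are real), set $S_1 := P_1/G$, $\hat G := V/G$ and $k := 1$, so that $P_2 = G(S_1+k\hat G)$; the non-vanishing of $P_1$ and $P_2$ forces $G$, $S_1$, and $S_1+k\hat G$ to be nowhere zero, and $\gcd(S_1,\hat G)=1$ holds by construction.

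The crux, and the only serious obstacle, is to show that every irreducible factor of $\hat G$ divides $G$. Substituting $P_1=GS_1$ and $V=G\hat G$ into $P_1 V' = R_2 V$ gives $S_1(G\hat G)' = R_2\hat G$. If an irreducible $f$ divided $\hat G$ but not $G$, a direct $f$-adic valuation computation---using $f\nmid S_1$ (from $\gcd(S_1,\hat G)=1$) together with the non-trivial fact that $f\nmid f'$ for any irreducible of $\mathbb{C}[\cos(t),\sin(t)]$ (which follows from the Laurent-polynomial presentation)---would force $v_f(R_2) = -1$, contradicting $R_2\in\mathbb{C}[\cos(t),\sin(t)]$. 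Once this factorization condition is secured, inserting $T = (G\hat G)'/\hat G$ into $A = P_1 R_1 = GS_1(S_1+k\hat G)T$ and $B = -(GS_1)' - (S_1+k\hat G)T$ recovers the exact formulas in the statement.
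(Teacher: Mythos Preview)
Your proposal is correct and reaches the same conclusion, but the route for the \emph{only if} direction differs from the paper's in two linked respects.

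First, the choice of $\hat G$ and $k$. The paper sets $G=\gcd(P_1,P_2)$ (well-defined in $\mathbb{R}[\cos(t),\sin(t)]$ since both $P_i$ are nonvanishing, Corollary~\ref{cor:ufd}), writes $S_i=P_i/G$, and then \emph{defines} $\hat G$ as the product of those irreducible factors of $S_2-S_1$ that already divide $G$; the leftover quotient $H=(S_2-S_1)/\hat G$ is subsequently shown to be a nonzero constant $k$. You instead take $\hat G=(P_2-P_1)/G=S_2-S_1$ outright, fix $k=1$, and prove \emph{a posteriori} that every irreducible factor of this $\hat G$ divides $G$. The two outputs are the same up to absorbing the paper's scalar $k$ into $\hat G$.

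Second, the mechanism for the key divisibility step. The paper stays inside $\mathbb{R}[\cos(t),\sin(t)]$ and, from
\[
G(t)H'(t)\hat G(t) \;=\; H(t)\bigl(\hat G(t)(S(t)-G'(t))-G(t)\hat G'(t)\bigr),
\]
invokes Corollary~\ref{cor:ufd2} together with a degree comparison to force $H'=k_0H$ with $k_0$ constant, hence $H\in\mathbb{R}$. You instead pass to the UFD $\mathbb{C}[\cos(t),\sin(t)]\cong\mathbb{C}[z,z^{-1}]$ and run an $f$-adic valuation count on $S_1(G\hat G)'=R_2\hat G$, using that no irreducible $f$ divides $f'$. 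Your approach sidesteps the half-factorial subtleties of the real ring (and most of the Appendix machinery) at the cost of a small descent step to recover real $G,S_1,\hat G$ and to translate the divisibility condition back to $\mathbb{R}$; the paper's approach keeps everything real throughout but leans on Proposition~\ref{Prop P0} and Corollaries~\ref{cor:ufd}--\ref{cor:ufd2}. Both arguments are valid, and your \emph{if} direction matches the paper's (which is left to direct verification).
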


\begin{proof}
Assume $1-P_1(t)x=0$, $1-P_2(t)x=0$ are two different non-trivial invariant curves of \eqref{eq:Abel}. Recall that by Remark~\ref{Rem:invcurve}, $P_1(t) \neq 0$ and $P_2(t) \neq 0,$ for all $t \in \mathbb{R}$, so they have unique decomposition (see Corollary \ref{cor:ufd}). Thus there exists their greatest common divisor. Set $G(t) := \gcd(P_1(t), P_2(t))$,  $S_1(t) := P_1(t)/G(t)$ and $S_2(t):=P_2(t)/G(t)$. Moreover, since, by Proposition \ref{prop:inv}, $P_1(t)$ and $P_2(t)$ divide $A(t)$, there exists $S(t) \in \mathbb{R}[\cos(t),\sin(t)]$ such that \[A(t) = G(t) S_1(t) S_2(t) S(t)\] and, by Proposition \ref{prop:inv}, \[B(t) = -(G(t) S_1(t))' - S_2(t) S(t) = -(G(t) S_2(t))' - S_1(t) S(t).\] Thus, \[\Big(G(t)(S_2(t) - S_1(t))\Big)' = (S_2(t) - S_1(t)) S(t).\] Let $\hat G(t)$ be the product of all the factors of $S_2(t) - S_1(t)$ that divide $G(t)$; note that $\hat G(t)$ is well-defined by Proposition \ref{Prop P0} because $G(t) \neq 0$ for all $t \in \mathbb{R}$. Set $H(t) := (S_2(t)-S_1(t))/\hat G(t). H(t)$ does not necessarily have a unique decomposition; however, by construction, no irreducible factor of $H(t)$ (in any of its factorizations) can divide $G(t)$. Now, from  
\begin{align*}
G'(t) H(t) \hat G(t) + G(t) (H(t) \hat G(t))' & = (G(t)(S_2(t) - S_1(t))) \\ & = (S_2(t) - S_1(t)) S(t) \\ & = H(t) \hat G(t) S(t),
\end{align*}
it follows that $G(t)H'(t)\hat G(t) + G(t)H(t)\hat G'(t) = H(t) \hat G(t) (S(t)-G'(t))$. So, \begin{equation}\label{ecu:S} G(t)H'(t)\hat G(t) = H(t) \Big(\hat G(t) (S(t) - G'(t)) - G(t) \hat G'(t)\Big).\end{equation} Therefore, since $G(t) \hat G(t)$ have no real zeros and no common irreducible factors with $H(t)$, by Corollary \ref{cor:ufd2}, $G(t) \hat G(t)$ divides $R(t) := \hat G(t) (S(t)-G'(t)) - G(t) \hat G'(t)$. Moreover, noticing that $\deg(R(t)) \leq \deg(\hat G(t) G'(t)) = \deg(\hat G(t) G(t))$, we have that $R(t)/(G(t) \hat G(t)) = k_0 \in \mathbb{R}[\cos(t), \sin(t)]$. Therefore, $H'(t) = H(t) R(t)/(\hat G(t) G(t)) = H(t) k_0$ and we conclude that $H(t) = k$, for some $k \in \mathbb{R}$, and that $S_2(t) = S_1(t) + k \hat G(t)$.
Now, since $P_1(t) \neq P_2(t)$, we have that $k \neq 0$. So, replacing $H(t)$ by $k$ in \eqref{ecu:S}, we obtain that $S(t) = G'(t) + (G(t) \hat G'(t))/\hat G(t)$ as claimed.

Finally, since $S_1(t) \neq S_2(t),$ The opposite is deduced by direct verification using Proposition \ref{prop:inv}.
\end{proof}

\begin{exam}\label{ejemplo2}
In order to obtain Example~\ref{ejemplo1}, it suffices to apply Proposition~\ref{prop:param} with 
$G(t) = (\cos(t)+2)(\sin(t)+2)$, $\hat G(t) = \sin(t)+2$, $S_1(t)=\sin(t)+4$, and $k=-1$. 
\end{exam}

\section{Darboux first integrals and proof of the main result}

In this section, we use Darboux integrability theory to bound the maximum number of invariant curves that \eqref{eq:Abel} can have without forcing the existence of a center. 

We say that $f(t,x)$, smooth enough and not identically constant, is a first integral of \eqref{eq:Abel} if $\mathcal{X}f=0$, that is, $f(t,x) = 0$ is an invariant curve of \eqref{eq:Abel} with zero cofactor. Equivalently, $f(t,x(t))=0$ is constant if $x(t)$ is a solution of the equation. 

We say that a first integral $f$ of \eqref{eq:Abel} is of Darboux type if \[f(t,x)=\prod_{i=0}^{r}f_i(t,x)^{\alpha_i},\] where $f_i(t,x)=0$ are invariant curves of the equation and $\alpha_i\in\mathbb{C}$.

First, we present Darboux's general result that relates the existence of a first Darboux integral with the linear dependence of the cofactors of the invariant curves. We have adapted its statement to our situation

\begin{theo}[Darboux's Theorem, \cite{Darboux}]\label{IntegralDarboux}
Let $f_0(t,x)=0,\dots, f_r(t,x)=0$ be invariant curves of \eqref{eq:Abel} with  cofactors $K_0(t,x),\dots, K_r(t,x)$, respectively. If there exist $\alpha_0,\dots,\alpha_r\in\mathbb{C}$ such that $\sum_{i=0}^{r}\alpha_i K_i(t,x)=0$ then $f(t,x)=\prod_{i=0}^{r}f_i(t,x)^{\alpha_i}$ is a first integral of \eqref{eq:Abel}.
\end{theo}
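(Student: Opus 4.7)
The plan is to verify the defining identity $\mathcal{X}f = 0$ by direct computation, exploiting the fact that $\mathcal{X}$ is a derivation (it obeys the Leibniz and chain rules) and using the invariance relations $\mathcal{X}f_i = K_i f_i$.

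First I would restrict attention to an open subset $U \subset \mathbb{R}^2$ on which none of the $f_i$ vanish; on each simply connected piece of $U$ one can fix branches of $\log f_i$ so that the complex powers $f_i^{\alpha_i} = \exp(\alpha_i\log f_i)$ are smooth. For each index $i$, the chain rule together with invariance gives
\[
\mathcal{X}(f_i^{\alpha_i}) \;=\; \alpha_i\,f_i^{\alpha_i-1}\,\mathcal{X}(f_i) \;=\; \alpha_i\,f_i^{\alpha_i-1}\,K_i\,f_i \;=\; \alpha_i\,K_i\,f_i^{\alpha_i}.
\]
Applying the Leibniz rule to $f = \prod_{i=0}^{r} f_i^{\alpha_i}$ then yields
\[
\mathcal{X}f \;=\; \sum_{j=0}^{r}\Bigl(\prod_{i\neq j} f_i^{\alpha_i}\Bigr)\,\mathcal{X}(f_j^{\alpha_j}) \;=\; f\sum_{j=0}^{r}\alpha_j K_j,
\]
which vanishes by the hypothesis $\sum_{j=0}^{r}\alpha_j K_j = 0$. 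Hence $\mathcal{X}f = 0$ on $U$; by continuity of $\mathcal{X}f$ on the domain where $f$ is defined, and by the density of $U$, the identity extends wherever $f$ makes sense.

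The only point that needs a little care, rather than being a genuine obstacle, is the handling of the complex exponents: $f$ need not be single-valued on all of $\mathbb{R}^2$, but the derivation calculation is local and branch-independent, so it holds on every simply connected open set avoiding the zero loci of the $f_i$. In essence the argument is the logarithmic-differentiation identity $\mathcal{X}(\log f) = \sum_i \alpha_i\,\mathcal{X}(\log f_i) = \sum_i \alpha_i K_i$, which the hypothesis sets to zero.
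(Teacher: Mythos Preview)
Your argument is correct and is the standard logarithmic-differentiation proof of Darboux's classical theorem. Note, however, that the paper does not supply its own proof of this statement: it is quoted as a known result with a citation to Darboux's original memoir, so there is no ``paper's proof'' to compare against. Your write-up would serve perfectly well as a self-contained justification were one desired.
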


The following result is a direct application of Theorem \ref{IntegralDarboux} for the case where the invariant curves $f_0(t,x)=0,\dots, f_r(t,x)=0$ are all non-trivial and have the form described in Remark \ref{Rem:invcurve}.

\begin{prop}\label{prop:Darboux} 
Let $\alpha_i \in \mathbb{R}, i=1,\dots,r$, and $\alpha_0 :=-\sum_{i=1}^r \alpha_i$. If $1-P_i(t)x=0, i=1,\dots,r$ are invariant curves of \eqref{eq:Abel}, then $f(t,x):=x^{\alpha_0}\prod_{i=1}^r (1-P_i(t)x)^{\alpha_i}$ is a first integral of \eqref{eq:Abel} if and only if 
\begin{equation}\label{eq:alphacondition}
\sum_{i=1}^{r}\alpha_i \frac{A(t)}{P_i(t)}=0.
\end{equation}
\end{prop}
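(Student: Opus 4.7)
The plan is to apply Darboux's theorem (Theorem \ref{IntegralDarboux}) to the invariant curves $f_0(t,x) = x$ and $f_i(t,x) = 1 - P_i(t) x$ for $i=1,\dots,r$, using the explicit cofactors already identified in the paper. Recall that $x = 0$ has cofactor $K_0(t,x) = A(t) x^2 + B(t) x$, while by Proposition \ref{prop:inv} (with $c=1$) the cofactor of $1 - P_i(t) x = 0$ is
\[
K_i(t,x) = A(t) x^2 - P_i'(t) x, \qquad i=1,\dots,r.
\]
So the whole proof reduces to checking when the linear combination $\alpha_0 K_0 + \sum_{i=1}^r \alpha_i K_i$ vanishes identically.

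I would compute this linear combination explicitly. The coefficient of $x^2$ is $\bigl(\alpha_0 + \sum_{i=1}^r \alpha_i\bigr) A(t)$, which vanishes by the defining condition $\alpha_0 = -\sum_{i=1}^r \alpha_i$. The coefficient of $x$ is $\alpha_0 B(t) - \sum_{i=1}^r \alpha_i P_i'(t)$, which I would rewrite using the relation from Proposition \ref{prop:inv}: since $B(t) = -P_i'(t) - A(t)/P_i(t)$ for each $i$, we get $\alpha_i B(t) + \alpha_i P_i'(t) = -\alpha_i A(t)/P_i(t)$. Summing on $i$ and using $\alpha_0 = -\sum \alpha_i$ turns the $x$-coefficient into $\sum_{i=1}^r \alpha_i A(t)/P_i(t)$. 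Thus
\[
\alpha_0 K_0(t,x) + \sum_{i=1}^r \alpha_i K_i(t,x) \;=\; \left(\sum_{i=1}^r \alpha_i \frac{A(t)}{P_i(t)}\right) x.
\]

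The \emph{if} direction then follows directly from Darboux's theorem: if \eqref{eq:alphacondition} holds, the displayed combination is zero, so $f(t,x) = x^{\alpha_0}\prod_{i=1}^r(1 - P_i(t) x)^{\alpha_i}$ is a first integral. For the \emph{only if} direction I would argue by logarithmic differentiation along $\mathcal{X}$: since each factor of $f$ satisfies $\mathcal{X}f_i = K_i f_i$, one obtains $\mathcal{X}f/f = \alpha_0 K_0 + \sum_{i=1}^r \alpha_i K_i$ on the open set where $f \neq 0$. So $\mathcal{X} f \equiv 0$ forces this sum to vanish, and by the calculation above this is equivalent to condition \eqref{eq:alphacondition} (since $x$ is not identically zero and $A(t)/P_i(t)$ are trigonometric polynomials).

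There is no real obstacle here; the only thing to be careful about is making the \emph{only if} direction rigorous, because Darboux's theorem is typically stated as a one-way implication. The clean way is the logarithmic-derivative identity $\mathcal{X}f / f = \sum \alpha_i K_i$, which turns the first-integral condition into the exact linear-dependence equation among the cofactors, giving both implications simultaneously.
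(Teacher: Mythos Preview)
Your proposal is correct and follows essentially the same approach as the paper: both compute $(\mathcal{X}f)/f=\alpha_0K_0+\sum_{i=1}^r\alpha_iK_i$, cancel the $x^2$-term using $\alpha_0=-\sum\alpha_i$, and reduce the $x$-term to $\sum_i\alpha_iA(t)/P_i(t)$ via the relation $B(t)=-P_i'(t)-A(t)/P_i(t)$ from Proposition~\ref{prop:inv}. Your explicit remark that the logarithmic-derivative identity gives both implications (not just the one provided by Darboux's theorem) is exactly how the paper handles it as well.
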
 

\begin{proof}
First, we recall that  $f_0(t,x)=x=0$ is always an invariant curve of \eqref{eq:Abel} with cofactor $K_0(t,x)=A(t)x^2+B(t)x$. Furthermore, by Proposition \ref{prop:inv}, we have the cofactor of $1-P_i(t)x=0$ is $K_i(t,x)=A(t)x^2- P_i'(t)x$ for each $i = 1, \ldots, r.$ Therefore,
\begin{align*}
(\mathcal{X}f)(t,x) & =\left(\alpha_0A(t)x^2+\alpha_0B(t)x +\sum_{i=1}^r \alpha_i\Big(A(t)x^2-P'_i(t)x\Big)\right)f(t,x)\\
& = -\left(\sum_{i=1}^r \alpha_i\Big(B(t) + P'_i(t)\Big) x \right)f(t,x).
\end{align*}
Moreover, by Proposition \ref{prop:inv}, $B(t) = -P'_i(t) - \frac{A(t)}{P_i(t)}, i = 1, \ldots, r$. So, we conclude that 
\[(\mathcal{X}f)(t,x)  =  \left(\sum_{i=1}^{r}\alpha_i  \frac{A(t)}{P_i(t)} x \right)f(t,x) \]
Now, by the definition of first integral and Theorem \ref{IntegralDarboux}, our claim follows.
\end{proof}

Note that, as we have seen in the proof of the previous result, \eqref{eq:alphacondition} is a necessary and sufficient condition for the cofactors to be linearly dependent.

We can now complete the proof of the main result (Theorem \ref{th:main}).

\noindent\emph{Proof of Theorem \ref{th:main}.}
If \eqref{eq:Abel} has less than three non-trivial rational limit cycles, then the number of rational limit cycles is bounded by $\deg(A)+1$ because $\deg(A)\geq 1$. Thus, we assume that \eqref{eq:Abel} has $r \geq 3$ non-trivial rational limit cycles, corresponding to the invariant curves $1-P_1(t)x=0,\dots,1-P_r(t)x=0$ of \eqref{eq:Abel} by Proposition \ref{prop:key}. 

Since $r \geq 3$, by Corollary \ref{coro3c} we know that $\deg(P_i)=\deg(A)/2, i=1,\dots,r$, and consequently $\deg(A/P_i)=\deg(A)/2$ for all $i$ by Proposition \ref{prop:inv}. Moreover, by Proposition \ref{prop:Darboux}, the trigonometric polynomials $A/P_i, i = 1, \ldots, r$, are linearly independent. Otherwise, there would be a Darboux first integral and thus a center. 

Finally, since the $\mathbb{R}-$vector space of trigonometric polynomials of degree $\deg(A)/2$ has dimension $\deg(A)+1$, we conclude that $r \leq \deg(A)+1$.\qed

\appendix
\section{On factorizations issues in $\mathbb{R}[\cos(t),\sin(t)]$.}\label{apendice}

It is well known that the ring of real trigonometric polynomials is not a unique factorization domain. However, it is a Dedekind half-factorial domain (\cite[Theorem 3.1]{P-PH}). Therefore, every non-zero non-unit is a finite product of irreducible elements, and any two factorizations into irreducibles of an element in $\mathbb{R}[\cos(t),\sin(t)]$ have the same number of irreducible factors. This allows us to consider the irreducible factors of a given real trigonometric polynomial or to use expressions such us ``$P(t)$ and $Q(t)$ have no common irreducible factors'', regardless of the fact that the greatest common divisor is not defined in half-factorial domains in general.

Recall that, given a non-zero real trigonometric polynomial \[P(t) = \sum_{k=0}^n a_0 \cos(kt) + b_0 \sin(kt),\quad a_k, b_k \in \mathbb{R}\] the degree of $P(t), \deg(P)$, is the biggest $k$ such that $a_k \cdot b_k \neq 0$. Note that $\deg(P Q) = \deg(P) + \deg(Q)$ and, if $\deg(P) > 0$, then $\deg(P') = \deg(P).$ In particular, $P'=0$ if and only if $P \in \mathbb{R}$.

Furthermore, since the irreducible elements of $\mathbb{R}[\cos(t),\sin(t)]$ are those of the form \[a\cos(t)+b\sin(t)+c,\quad a,b,c\in\mathbb{R}, (a,b) \neq (0,0)\] by \cite[Theorem 3.4]{P-PH}, we have that the degree of a non-zero non-unit element in $\mathbb{R}[\cos(t),\sin(t)]$ is the number of its irreducible factors.

Given $z \in \mathbb{R}[\cos(t),\sin(t)]$, in the following we write $\langle z \rangle$ for the principal ideal of $\mathbb{R}[\cos(t),\sin(t)]$ generated by $z$.

The irreducible factors of real trigonometric polynomials without real zeros are characterized by the following proposition.

\begin{prop}\label{Prop P0}
Let $P(t) \in \mathbb{R}[\cos(t),\sin(t)]$ be non-zero and non-unit. The following statements are equivalent.
\begin{enumerate}
\item $P(t) \neq 0$ for all $t \in \mathbb{R}$.
\item Any irreducible factor of $P(t)$ can be written (up to units) in the form $a\cos(t)+b\sin(t)+c$ where $a,b,c\in\mathbb{R}, (a,b) \neq (0,0)$ and $c^2 > a^2 + b^2$.
\item If $z$ is an irreducible factor of $P(t)$, then $\langle z \rangle$ is a maximal ideal of $\mathbb{R}[\cos(t),\sin(t)]$.
\item Every maximal ideal of $\mathbb{R}[\cos(t),\sin(t)]$ containing $\langle P(t) \rangle$ is principal.
\end{enumerate}
\end{prop}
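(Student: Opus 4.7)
The plan is to prove the four conditions equivalent as $(1)\Leftrightarrow(2)$, $(2)\Leftrightarrow(3)$, and $(3)\Leftrightarrow(4)$. I expect the substantive step to be $(2)\Leftrightarrow(3)$, where the quotient ring $\mathbb{R}[\cos(t),\sin(t)]/\langle z\rangle$ must be computed explicitly; the remaining equivalences reduce to the irreducible classification recalled in this appendix plus routine commutative algebra.

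For $(1)\Leftrightarrow(2)$, I would factor $P(t)$ into irreducibles using the classification recalled just before the statement: every non-unit irreducible of $\mathbb{R}[\cos(t),\sin(t)]$ is, up to a nonzero real scalar, of the form $a\cos(t)+b\sin(t)+c$ with $(a,b)\neq(0,0)$. Since $a\cos(t)+b\sin(t)$ ranges over $[-\sqrt{a^2+b^2},\sqrt{a^2+b^2}]$, such a factor is nowhere zero on $\mathbb{R}$ exactly when $c^2>a^2+b^2$. A product vanishes at some point iff one of its factors does, so the equivalence follows.

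For $(2)\Leftrightarrow(3)$, I would use the standard presentation $\mathbb{R}[\cos(t),\sin(t)]\cong \mathbb{R}[x,y]/(x^2+y^2-1)$ with $x=\cos(t)$, $y=\sin(t)$, and compute $\mathbb{R}[\cos(t),\sin(t)]/\langle z\rangle$ for an irreducible $z=a\cos(t)+b\sin(t)+c$. Assuming without loss of generality $a\neq 0$ (the case $a=0$, $b\neq 0$ is symmetric), one solves $ax+by+c=0$ for $x$ and substitutes into $x^2+y^2-1$ to obtain the single relation
\[ (a^2+b^2)y^2+2bcy+(c^2-a^2)=0, \]
so the quotient is isomorphic to $\mathbb{R}[y]$ modulo this quadratic. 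Its discriminant is $4a^2(a^2+b^2-c^2)$, which is negative iff $c^2>a^2+b^2$; in that range the quotient is a field (isomorphic to $\mathbb{C}$), while in the complementary range the quadratic factors over $\mathbb{R}$, producing zero-divisors and preventing $\langle z\rangle$ from even being prime. Therefore $\langle z\rangle$ is maximal iff $z$ has the form described in $(2)$; applying this to each irreducible factor of $P(t)$ gives the equivalence. This is the one point in the proof where genuine computation takes place, and the only possible pitfall is checking that both degenerate cases $c^2=a^2+b^2$ and $c^2<a^2+b^2$ fail maximality, which the discriminant analysis settles uniformly.

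For $(3)\Leftrightarrow(4)$, the argument is purely ideal-theoretic. For $(3)\Rightarrow(4)$: a maximal ideal $\mathfrak m$ containing $\langle P(t)\rangle$ is prime and contains a factorization $P(t)=z_1\cdots z_n$ into irreducibles, so contains some $z_i$; then $\langle z_i\rangle\subseteq\mathfrak m$ with $\langle z_i\rangle$ already maximal by $(3)$, forcing equality and principality of $\mathfrak m$. For $(4)\Rightarrow(3)$: an irreducible factor $z$ of $P(t)$ is a non-unit, so $\langle z\rangle$ is contained in some maximal ideal $\mathfrak m\supseteq\langle P(t)\rangle$; by $(4)$, $\mathfrak m=\langle w\rangle$ for some non-unit $w$, and $w\mid z$ together with the irreducibility of $z$ forces $w$ and $z$ to be associates, so $\langle z\rangle=\mathfrak m$ is maximal.
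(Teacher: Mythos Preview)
Your proof is correct and follows the same implication scheme $(1)\Leftrightarrow(2)$, $(2)\Leftrightarrow(3)$, $(3)\Rightarrow(4)$, $(4)\Rightarrow(3)$ as the paper, with identical arguments for the first and last pair of implications. The only difference is in $(2)\Leftrightarrow(3)$: the paper simply invokes \cite[Theorem 3.8]{P-PH}, which states directly that $\langle a\cos(t)+b\sin(t)+c\rangle$ is maximal in $\mathbb{R}[\cos(t),\sin(t)]$ if and only if $c^2>a^2+b^2$, whereas you supply a self-contained proof via the presentation $\mathbb{R}[x,y]/(x^2+y^2-1)$ and a discriminant computation. Your route is more elementary and avoids the external dependence, at the cost of a short explicit calculation; both arrive at exactly the same criterion.
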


\begin{proof}
$(1) \Longleftrightarrow (2).$ Let $z_i \in \mathbb{R}[\cos(t),\sin(t)],\ i = 1, \ldots, n,$ be irreducible elements such that $P = u\, z_1 \cdots z_n$ for some $u \in \mathbb{R}$. Obviously, $P(t) \neq 0$ for all $t \in \mathbb{R}$ if and only if $z_i(t) \neq 0$ for all $t \in \mathbb{R}$ and all $i \in \{1, \ldots, n\}$. Since irreducible elements in $\mathbb{R}[\cos(t),\sin(t)]$ have the form $a\cos(t)+b\sin(t)+c$ with $a,b, c\in\mathbb{R}$ and $(a,b) \neq (0,0)$, and $a\cos(t)+b\sin(t)+c$ has no real zeros if and only if $c^2 > a^2+b^2$, we are done.

\noindent $(2) \Longleftrightarrow (3).$ Let $z = a\cos(t)+b\sin(t)+c \in \mathbb{R}[\cos(t),\sin(t)]$ with $(a,b) \neq (0,0)$. Since, by \cite[Theorem 3.8]{P-PH}, $\langle z \rangle$ is a maximal ideal if and only if $c^2 > a^2 + b^2$, we have the desired equivalence.

\noindent $(3) \Longrightarrow (4).$ Let $\mathfrak{m}$ be a maximal ideal of $\mathbb{R}[\cos(t),\sin(t)]$ such that $P(t) \in \mathfrak{m}$. Since $\mathfrak{m}$ is a prime ideal, there exists an irreducible factor $z$ of $P(t)$ such that $z \in \mathfrak{m}$; equivalently, $\langle z \rangle \subseteq \mathfrak{m}$. From the maximality of $\langle z \rangle$ follows that $\mathfrak{m} = \langle z \rangle$. 

\noindent $(4) \Longrightarrow (3).$ Let $z$ be an irreducible factor of $P(t)$ and let $\mathfrak{m}$ be a maximal ideal of $\mathbb{R}[\cos(t),\sin(t)]$ containing $\langle z \rangle$. Since $\mathfrak{m}$ is principal, there exists $w \in \mathbb{R}[\cos(t),\sin(t)]$ such that $\mathfrak{m} = \langle w \rangle$; in particular, $w$ divides $z$ and the irreducibility of $z$ implies $\langle z \rangle = \mathfrak{m}$.
\end{proof}

\begin{coro}\label{cor:ufd}
Let $P(t) \in \mathbb{R}[\cos(t),\sin(t)]$ be non-zero and non-unit. If $P(t) \neq 0$ for all $t \in \mathbb{R}$, then $P(t)$ has a unique factorization except for order of factors or product by units.
\end{coro}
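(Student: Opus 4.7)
The plan is to bridge element-wise factorization with ideal-wise factorization, exploiting the fact that $\mathbb{R}[\cos(t),\sin(t)]$ is a Dedekind domain (as recalled in the opening of the appendix). The key observation is that the hypothesis $P(t) \neq 0$ for all $t \in \mathbb{R}$ is exactly what triggers condition (3) of Proposition \ref{Prop P0}: every irreducible factor of $P(t)$ generates a maximal ideal. Since in a Dedekind domain the factorization of a nonzero ideal as a product of prime ideals is unique, this should force uniqueness of the irreducible factors up to units.

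First, I would take two factorizations into irreducibles
\[
P(t) = u\, z_1 \cdots z_n = v\, w_1 \cdots w_m,
\]
where $u,v$ are units and $z_i, w_j \in \mathbb{R}[\cos(t),\sin(t)]$ are irreducible. The half-factorial property of $\mathbb{R}[\cos(t),\sin(t)]$ immediately gives $n = m$. Passing to principal ideals and absorbing the units yields
\[
\langle P(t) \rangle = \langle z_1 \rangle \cdots \langle z_n \rangle = \langle w_1 \rangle \cdots \langle w_n \rangle.
\]
Each $z_i$ divides $P(t)$, so $z_i(t) \neq 0$ for all $t \in \mathbb{R}$, and by Proposition \ref{Prop P0}(3) the ideal $\langle z_i \rangle$ is maximal; in particular, prime. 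The same applies to $\langle w_j \rangle$. Hence both decompositions display $\langle P(t) \rangle$ as a product of prime ideals.

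Next, I would invoke the unique factorization of nonzero ideals into prime ideals in the Dedekind domain $\mathbb{R}[\cos(t),\sin(t)]$ to obtain a permutation $\sigma$ of $\{1,\dots,n\}$ such that $\langle z_i \rangle = \langle w_{\sigma(i)} \rangle$ for all $i$. Equality of principal ideals in an integral domain means that the generators differ by a unit factor, so $z_i$ and $w_{\sigma(i)}$ are associates. This is exactly uniqueness up to the order of factors and multiplication by units.

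The main conceptual obstacle is really the one already resolved by Proposition \ref{Prop P0}: without the no-real-zeros hypothesis, irreducible factors can generate non-maximal (in fact, non-prime) ideals, and ideal uniqueness no longer descends to element uniqueness; the classical counterexample $\sin^2(t) = (1-\cos(t))(1+\cos(t))$ in $\mathbb{R}[\cos(t),\sin(t)]$ illustrates precisely this failure when real zeros are present. Once that obstacle is removed by the hypothesis, the argument reduces to the standard Dedekind-domain machinery and is essentially formal.
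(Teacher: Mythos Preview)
Your proof is correct and takes a slightly more structural route than the paper's. Both arguments rest on the same key fact from Proposition~\ref{Prop P0}: under the no-real-zeros hypothesis, every irreducible factor of $P(t)$ generates a maximal (hence prime) ideal. The paper exploits this directly via an elementary Euclid-style induction: since $\langle w_1 \rangle$ is prime and contains the product $u\,z_1\cdots z_n$, some $z_j$ lies in it; irreducibility of $z_j$ forces $z_j$ and $w_1$ to be associates; cancel and repeat. You instead pass to the ideal level and invoke unique factorization of ideals in the Dedekind domain $\mathbb{R}[\cos(t),\sin(t)]$ as a black box, then descend back to elements. Your packaging is more conceptual and makes explicit why the Dedekind property mentioned at the start of the appendix is relevant; the paper's version is more self-contained and shows that in fact only primality of the relevant principal ideals, not the full Dedekind machinery, is needed once half-factoriality has equalized the lengths.
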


\begin{proof}
Let $u\, z_1 \cdots z_n = v\, w_1 \cdots w_n$ be two factorizations of $P$ into irreducibles, $z_i, w_i,\ i = 1, \ldots, n$, for some $u, v \in \mathbb{R}$. Since, by Proposition \ref{Prop P0}, $\langle w_1 \rangle$ is maximal and $u\, z_1 \cdots z_n = P(t) \in \langle w_1 \rangle$, we have that there exists $j$ such that $z_j \in \langle w_1 \rangle$. So it follows from the irreducibility of $z_j$ that $z_j = u_1 w_1$ for some $u_1 \in \mathbb{R}$. Now it is sufficient to repeat the same argument with $u u_1 \, z_1 \cdots z_{j-1} z_{j+1} \cdots z_n = v\, w_2 \cdots w_n$, and so on, to get the desired result.
\end{proof}

Clearly, the converse of the previous corollary is not true, since there are many real irreducible trigonometric polynomials with real zeros.

\begin{coro}\label{cor:ufd2}
Let $P(t), H(t)$ and $R(t) \in \mathbb{R}[\cos(t),\sin(t)]$ be non-zero and non-units. If $P(t) \neq 0$, for every $t \in \mathbb{R}, P(t) = H(t) R(t)$ and 
no irreducible factor of $P(t)$ divides $H(t)$, then $P(t)$ divides $R(t)$.
\end{coro}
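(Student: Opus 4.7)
The plan is to exploit the unique factorization property afforded by Corollary~\ref{cor:ufd} together with the maximality of the principal ideals generated by the irreducible factors of $P(t)$, granted by Proposition~\ref{Prop P0}. I will read the hypothesis as $P(t)$ dividing the product $H(t)R(t)$, which is the form in which the corollary is invoked in the proof of Proposition~\ref{prop:param}. First I would write $P=u\,z_1\cdots z_n$ with $u\in\mathbb{R}^{\times}$ and each $z_i$ irreducible; this factorization exists and is essentially unique by Corollary~\ref{cor:ufd}, because $P$ has no real zeros. Proposition~\ref{Prop P0} then guarantees that each ideal $\langle z_i\rangle$ is maximal (hence prime) in $\mathbb{R}[\cos(t),\sin(t)]$, and by hypothesis none of the $z_i$ divides $H$.

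The inductive core is as follows: suppose that one has already shown $R=z_1\cdots z_k\,S_k$ for some $S_k\in\mathbb{R}[\cos(t),\sin(t)]$ (the case $k=0$ being trivial with $S_0=R$). The divisibility $P\mid HR$ then reads $u\,z_1\cdots z_n\mid z_1\cdots z_k\,H\,S_k$; cancelling the common non-zero factor $z_1\cdots z_k$, which is valid because $\mathbb{R}[\cos(t),\sin(t)]$ is an integral domain, yields $z_{k+1}\cdots z_n\mid H\,S_k$. In particular $z_{k+1}\mid HS_k$; since $\langle z_{k+1}\rangle$ is prime and, by hypothesis, $z_{k+1}\nmid H$, we conclude $z_{k+1}\mid S_k$, so that $z_1\cdots z_{k+1}\mid R$. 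Taking $k=n$ gives $P\mid R$ up to the unit $u$, which is the desired conclusion.

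There is no genuinely hard step; the argument is mechanical once the factorization machinery is in place. The only delicate point is recognising that the non-UFD nature of $\mathbb{R}[\cos(t),\sin(t)]$ does not interfere: the hypothesis $P(t)\neq 0$ for all $t\in\mathbb{R}$ is precisely what places $P$ in the subclass where Corollary~\ref{cor:ufd} and Proposition~\ref{Prop P0} apply, so that the irreducible factors of $P$ behave as primes and cancellation works exactly as in a UFD.
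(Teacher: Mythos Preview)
Your proof is correct and follows essentially the same route as the paper: factor $P=u\,z_1\cdots z_n$ via Corollary~\ref{cor:ufd}, use Proposition~\ref{Prop P0} to get that each $\langle z_i\rangle$ is prime, and peel off one irreducible factor from $R$ at a time. You were also right to read the hypothesis as $P\mid HR$ rather than the literal $P=HR$; the latter is how the paper phrases both the statement and its proof, but it cannot be what is meant, since $H$ non-unit would force $\deg R<\deg P$ and make $P\mid R$ impossible, and indeed the only invocation of this corollary (in Proposition~\ref{prop:param}) uses it in the divisibility form.
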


\begin{proof}
By Corollary \ref{cor:ufd}, there exist unique irreducible real trigonometric polynomials $z_1, \ldots, z_n$ such that $P(t) = u\, z_1 \cdots z_n$ for some $u \in \mathbb{R}$. If $z_1$ is an irreducible factor of $P(t)$, then $H(t) R(t) = P(t) \in \langle z_1 \rangle$. By Proposition \ref{Prop P0}, $\langle z_1 \rangle$ is maximal. Therefore, $H(t) \in \langle z_1 \rangle$ or $R(t) \in \langle z_1 \rangle$. However, since no irreducible factor of $P(t)$ divides $H(t)$, we conclude that $R(t) \in \langle z_1 \rangle$ and therefore $R(t) = \tilde R(t) z_1$ for some $\tilde R(t) \in \mathbb{R}[\cos(t), \sin(t)]$. Now, if we repeat the same argument with $\tilde P(t) = u\, z_2 \cdots z_n, \tilde R(t)$ and $z_2$, and so on, we get the desired result.
\end{proof}

Now, it is convenient to recall that $\mathbb{C}[\cos(t),\sin(t)]$ is an Euclidean domain (see \cite[Theorem 2.1]{P-PH}. In particular, it is a unique factorization domain.

\begin{coro}\label{cor:nofactors}
Let $P(t)$ and $Q(t) \in \mathbb{R}[\cos(t),\sin(t)]$ be non-zero and non-units. If $P(t) \neq 0,$ for all $t \in \mathbb{R}$, then $P(t)$ and $Q(t)$ are coprime in $\mathbb{C}[\cos(t),\sin(t)]$ if and only if they have no common irreducible factors in $\mathbb{R}[\cos(t),\sin(t)]$.
\end{coro}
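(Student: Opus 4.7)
The plan is to work via the ring isomorphism $\mathbb{C}[\cos(t),\sin(t)] \cong \mathbb{C}[y,y^{-1}]$ induced by $y = e^{it}$, which (as already noted just before the statement) makes $\mathbb{C}[\cos(t),\sin(t)]$ a Euclidean, hence factorial, domain. Its irreducibles are, up to units, the linear Laurent polynomials $y - \alpha$ with $\alpha \in \mathbb{C}^*$; its units are the monomials $c\,y^n$, $c \in \mathbb{C}^*$, $n \in \mathbb{Z}$; and a complex trigonometric polynomial lies in $\mathbb{R}[\cos(t),\sin(t)]$ precisely when the coefficients of its Laurent form satisfy $c_{-k} = \overline{c_k}$, equivalently when the set of zeros of the associated Laurent polynomial in $\mathbb{C}^*$ is stable under the involution $\alpha \mapsto 1/\bar\alpha$.

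For the ``only if'' direction, I will check that an irreducible $z(t) = a\cos(t) + b\sin(t) + c \in \mathbb{R}[\cos(t),\sin(t)]$ is not a unit of $\mathbb{C}[\cos(t),\sin(t)]$: indeed $y\,z(t)$ is an honest quadratic in $\mathbb{C}[y]$ (its leading coefficient $(a-bi)/2$ and its constant term $(a+bi)/2$ are both nonzero since $a,b$ are real with $(a,b) \neq (0,0)$), hence is not a monomial $c\,y^{n+1}$. Thus a common irreducible factor of $P$ and $Q$ in $\mathbb{R}[\cos(t),\sin(t)]$ is a common non-unit factor in $\mathbb{C}[\cos(t),\sin(t)]$, contradicting coprimality there.

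For the ``if'' direction, I argue contrapositively. Suppose $P$ and $Q$ share an irreducible factor in $\mathbb{C}[\cos(t),\sin(t)]$; up to a unit this factor is $y - \alpha$ for some $\alpha \in \mathbb{C}^*$. By Corollary \ref{cor:ufd}, $P$ has a unique factorization into real irreducibles $z_1, \ldots, z_n$, so $y - \alpha$ divides some $z_i$ in $\mathbb{C}[\cos(t),\sin(t)]$. Since $P$ has no real zeros, neither does $z_i$, so $|\alpha| \neq 1$ and $z_i$ factors in $\mathbb{C}[\cos(t),\sin(t)]$ (up to units) as the product $(y-\alpha)(y-1/\bar\alpha)$ of two \emph{distinct} linear Laurent polynomials. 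Applying the conjugation symmetry to the real trigonometric polynomial $Q$, the vanishing of $Q$ at $y = \alpha$ forces its vanishing at $y = 1/\bar\alpha$ as well, and hence the whole $z_i$ divides $Q$ in $\mathbb{C}[\cos(t),\sin(t)]$.

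It remains to descend the divisibility $z_i \mid Q$ from $\mathbb{C}[\cos(t),\sin(t)]$ to $\mathbb{R}[\cos(t),\sin(t)]$. For this, observe that $Q/z_i \in \mathbb{C}[\cos(t),\sin(t)]$ is real-valued, since $Q$ and $z_i$ are real and $z_i$ is nowhere zero; writing any element of $\mathbb{C}[\cos(t),\sin(t)]$ uniquely as $f + i g$ with $f, g \in \mathbb{R}[\cos(t),\sin(t)]$, a real-valued element has $g = 0$, so $Q/z_i \in \mathbb{R}[\cos(t),\sin(t)]$. Therefore $z_i$ is a common irreducible factor of $P$ and $Q$ in $\mathbb{R}[\cos(t),\sin(t)]$, the desired contradiction. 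The main technical obstacle is the conjugation-symmetry step, which both pairs the complex root $\alpha$ with $1/\bar\alpha$ and then allows one to upgrade the complex common factor to a genuine real one.
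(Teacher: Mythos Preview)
Your argument is correct. Both directions go through as written: for ``only if'' you correctly verify that a real irreducible $a\cos(t)+b\sin(t)+c$ is a non-unit in $\mathbb{C}[\cos(t),\sin(t)]$; for ``if'' the key conjugation symmetry $\alpha \mapsto 1/\bar\alpha$ of the roots of a real trigonometric polynomial, together with $|\alpha|\neq 1$ (forced by $z_i$ having no real zeros), lets you upgrade the single complex linear factor to the full real irreducible $z_i$, and the descent of divisibility via real-valuedness of $Q/z_i$ is clean.

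Your route is genuinely different from the paper's. The paper argues abstractly: given a common complex irreducible factor $z$, it observes that $P$ and $Q$ both lie in the proper ideal $\langle z\rangle \cap \mathbb{R}[\cos(t),\sin(t)]$, enlarges this to a maximal ideal $\mathfrak{m}$ of $\mathbb{R}[\cos(t),\sin(t)]$, and then invokes Proposition~\ref{Prop P0}(4) (every maximal ideal containing $P$ is principal, since $P$ has no real zeros) to conclude that $\mathfrak{m}=\langle w\rangle$ for some real irreducible $w$ dividing both $P$ and $Q$. Your approach instead works concretely in the Laurent model $\mathbb{C}[y,y^{-1}]$, pairing complex roots under $\alpha\leftrightarrow 1/\bar\alpha$; this is more explicit and essentially self-contained (you only need the existence of a real factorization of $P$, not the full force of Corollary~\ref{cor:ufd}, and you bypass Proposition~\ref{Prop P0} entirely), at the cost of being somewhat longer. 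The paper's version is shorter precisely because the structural work has already been packaged into Proposition~\ref{Prop P0}.
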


\begin{proof}
If $P(t)$ and $Q(t)$ have common irreducible factors in $\mathbb{R}[\cos(t),\sin(t)]$, then they have common irreducible factors in $\mathbb{C}[\cos(t),\sin(t)]$. 

Conversely, suppose that $P(t)$ and $Q(t)$ have no common irreducible factors in $\mathbb{R}[\cos(t),\sin(t)]$. If $z \in \mathbb{C}[\cos(t),\sin(t)]$ is an irreducible factor of $P(t)$ and $Q(t)$, then $P(t)$ and $Q(t)$ belong to $\langle z \rangle \cap \mathbb{R}[\cos(t),\sin(t)]$. Thus there exists a maximal ideal $\mathfrak{m}$ of $\mathbb{R}[\cos(t),\sin(t)]$ such that $P(t) \in \mathfrak{m}$ and $Q(t) \in \mathfrak{m}$. Since, by Proposition \ref{Prop P0}, $\mathfrak{m}$ is principal, we conclude that, contrary to the hypothesis, $P(t)$ y $Q(t)$ have a real common factor. 
\end{proof}

Note that for the above corollary to hold, the condition $P(t) \neq 0,$ for all $t \in \mathbb{R}$, is mandatory.
\begin{exam}
The trigonometric polynomials $P(t) = \sqrt{2}\sin(t) - 1$ and $Q(t) = -\sqrt{2}\cos(t)+1$ are irreducible in $\mathbb{R}[\cos(t),\sin(t)]$ and their respective factorizations in $\mathbb{C}[\cos(t),\sin(t)]$ are 
\[\left(\frac{\left( i-1\right)  \sin{(t)}+\left( i-1\right)  \cos{(t)}-\sqrt{2} i}{2}\right)\left(i \sin{(t)}+\cos{(t)}-\frac{i}{\sqrt{2}}-\frac{1}{\sqrt{2}}\right)\]
and 
\[-\left(\frac{\left(i+1\right)  \sin{(t)}+\left(i-1\right)  \cos{(t)}+\sqrt{2}}{2}\right)\left(i \sin{(t)}+\cos{(t)}-\frac{i}{\sqrt{2}}-\frac{1}{\sqrt{2}}\right).\]
Note that they both have the same last irreducible complex factor.
\end{exam}

\end{document}